\newcommand{\bburl}[1]{\textcolor{blue}{\url{#1}}}
\newcommand{\monthyear}[1]{%
  \def\@monthyear{\uppercase{#1}}}
\newcommand{\volnumber}[1]{%
  \def\@volnumber{\uppercase{#1}}}
\def\ps@plain{\ps@empty
  \def\@oddfoot{\@monthyear \hfil \thepage}%
  \def\@evenfoot{\thepage \hfil \@volnumber}}
\def\ps@firstpage{\ps@plain}
\def\ps@headings{\ps@empty
  \def\@evenhead{%
    \setTrue{runhead}%
    \def\thanks{\protect\thanks@warning}%
    \uppercase{\ }\hfil}%
  \def\@oddhead{%
    \setTrue{runhead}%
    \def\thanks{\protect\thanks@warning}%
    \hfill\uppercase{Bounds on Zeckendorf games}}%
  \def\@evenhead{%
    \setTrue{runhead}%
    \def\thanks{\protect\thanks@warning}%
    \uppercase{The Fibonacci Quarterly}\hfil}%
  \let\@mkboth\markboth
  \def\@evenfoot{%
    \thepage \hfil \@volnumber}%
  \def\@oddfoot{%
    \@monthyear \hfil \thepage}%
  }%
\theoremstyle{plain}
\numberwithin{equation}{section}
\newtheorem{thm}{Theorem}[section]
\newcommand{\ignore}[1]{}
\newcommand\scr{\scriptstyle}
\newcommand\ds{\displaystyle}
\newcommand\be{\begin{eqnarray}}
\newcommand\ee{\end{eqnarray}}
\newcommand\bea{\begin{eqnarray}}
\newcommand\eea{\end{eqnarray}}
\newcommand\ben{\begin{enumerate}}
\newcommand\een{\end{enumerate}}
\newtheorem{lem}[thm]{Lemma}
\newtheorem{defi}[thm]{Definition}
\newtheorem{rek}[thm]{Remark}
\begin{document}

\monthyear{TBD}
\volnumber{Volume, Number}
\setcounter{page}{1}

\title{Bounds on Zeckendorf Games}
\author{ANNA CUSENZA, AIDEN DUNKELBERG, KATE HUFFMAN, DIANHUI KE,\\ MICAH MCCLATCHEY, STEVEN J. MILLER, CLAYTON MIZGERD, VASHISTH TIWARI,\\JINGKAI YE, AND XIAOYAN ZHENG}



\address{University of California, Los Angeles, Los Angeles, CA 90095}
\email{ascusenza@g.ucla.edu}

\address{Williams College, Williamstown, MA 01267}
\email{awd4@williams.edu}





\address{University of Alabama, Tuscaloosa, AL 35401}
\email{klhuffman@crimson.ua.edu}


\address{University of Michigan, Ann Arbor, MI 48109}
\email{kdianhui@umich.edu}





\address{Houghton College, Houghton, NY 14744}
\email{mcclatch87@gmail.com}

\address{Department of Mathematics and Statistics, Williams College, Williamstown, MA 01267}
\email{sjm1@williams.edu}

\address{Department of Mathematics and Statistics, Williams College, Williamstown, MA 01267}
\email{cmm12@williams.edu}



\address{University of Rochester, Rochester, NY 14627}
\email{vtiwari2@u.rochester.edu}



\address{Whitman College, 280 Boyer Avenue, Walla Walla, WA, 99362}
\email{yej@whitman.edu.edu}


\address{Washington University in St. Louis, St. Louis, MO 63130}
\email{zhengxiaoyan@wustl.edu}


\date{\today}
\begin{abstract}
Zeckendorf proved that every positive integer $n$ can be written uniquely as the sum of non-adjacent Fibonacci numbers. We use this decomposition to construct a two-player game. Given a fixed integer $n$ and an initial decomposition of $n=n F_1$, the two players alternate by using moves related to the recurrence relation $F_{n+1}=F_n+F_{n-1}$, and whoever moves last wins. The game always terminates in the Zeckendorf decomposition; depending on the choice of moves the length of the game and the winner can vary, though for $n\ge 2$ there is a non-constructive proof that Player 2 has a winning strategy.

Initially the lower bound of the length of a game was order $n$ (and known to be sharp) while the upper bound was of size $n \log n$. Recent work decreased the upper bound to of size $n$, but with a larger constant than was conjectured. We improve the upper bound and obtain the sharp bound of $\frac{\sqrt{5}+3}{2}\ n - IZ(n) - \frac{1+\sqrt{5}}{2}Z(n)$, which is of order $n$ as $Z(n)$ is the number of terms in the Zeckendorf decomposition of $n$ and $IZ(n)$ is the sum of indices in the Zeckendorf decomposition of $n$ (which are at most of sizes $\log n$ and $\log^2 n$ respectively). We also introduce a greedy algorithm that realizes the upper bound, and show that the longest game on any $n$ is achieved by applying splitting moves whenever possible.
\end{abstract}
\maketitle

\tableofcontents


\section{Introduction}
The Fibonacci numbers are one the most interesting and famous sequences.
Among their fascinating properties, the Fibonacci numbers lend themselves to a beautiful theorem by Edouard Zeckendorf \cite{Ze} which states that each positive integer $n$ can be written uniquely as the sum of distinct, non-consecutive Fibonacci numbers. This sum is called the \textit{Zeckendorf decomposition} of $n$ and requires that we define the Fibonacci numbers by $F_1 = 1, F_2 = 2, F_3 = 3, F_4 = 5...$ instead of the usual $1, 1, 2, 3, 5...$ to create uniqueness.
Baird-Smith, Epstein, Flint and Miller \cite{BEFMgen,BEFM} create a game based on the Zeckendorf decomposition. We quote from \cite{BEFM} to describe the game.

We introduce some notation. By $\{1^n\}$ or $\{{F_1}^n\}$ we mean $n$ copies of $1$, the first Fibonacci number. If we have 3 copies of $F_1$, 2 copies of $F_2$, and 7 copies of $F_4$, we write either $\{{F_1}^3 \wedge {F_2}^2 \wedge {F_4}^7 \}$ or $\{1^3 \wedge 2^2 \wedge 5^7\}$.
\begin{defi}[The Two Player Zeckendorf Game]\label{defi:zg}
At the beginning of the game, there is an unordered list of $n$ 1's. Let $F_1 = 1, F_2 = 2$, and $F_{i+1} = F_i + F_{i-1}$; therefore the initial list is $\{{F_1}^n\}$. On each turn, a player can do one of the following moves.
\begin{enumerate}
\item If the list contains two consecutive Fibonacci numbers, $F_{i-1}, F_i$, then a player can change these to $F_{i+1}$. We denote this move $\{F_{i-1} \wedge F_i \rightarrow F_{i+1}\}$.
\item If the list has two of the same Fibonacci number, $F_i, F_i$, then
\begin{enumerate}
\item if $i=1$, a player can change $F_1, F_1$ to $F_2$, denoted by $\{F_1 \wedge F_1 \rightarrow F_2\}$,
\item if $i=2$, a player can change $F_2, F_2$ to $F_1, F_3$, denoted by $\{F_2 \wedge F_2 \rightarrow F_1 \wedge F_3\}$, and
\item if $i \geq 3$, a player can change $F_i, F_i$ to $F_{i-2}, F_{i+1}$, denoted by $\{F_i \wedge F_i \rightarrow F_{i-2}\wedge F_{i+1} \}$.
\end{enumerate}
\end{enumerate}
The players alternate moving. The game ends when one player moves to create the Zeckendorf decomposition.
\end{defi}
The moves of the game are derived from the Fibonacci recurrence, either combining terms to make the next in the sequence or splitting terms with multiple copies. A proof that this game is well defined and ends at the Zeckendorf decomposition can be found in \cite{BEFM}.

\ \\
We introduce some further notation and state some simple results.

\begin{itemize}

\item Let $i_{\max}(n)$ be the largest index of terms in the Zeckendorf decomposition of $n$. The order of $i_{\max}(n)$ is at most $\log n$; this follows immediately from the exponential growth of the Fibonacci numbers, as we can never use a summand larger than the original number $n$.\\ \

\item Let $\delta_i$ denote the number of $F_i$'s in the Zeckendorf decomposition of $n$. Then
$n=\sum_{i=1}^{i_{\max}(n)}\delta_i F_i$.\\ \

\item Let $Z(n)$ denote the number of terms in the Zeckendorf decomposition of $n$, and $Z(n)=\sum_{i=1}^{i_{\max}(n)}\delta_i $. The order of
$Z(n)$ is at most $\log n$ since $Z(n)\leq i_{\max}(n)$.\\ \

\item Let $IZ(n)$ denote the sum of indices in the Zeckendorf decomposition of $n$, and $IZ(n)=\sum_{i=1}^{i_{\max}(n)} i\, \delta_i $.
The order of $IZ(n)$ is at most $\log^2 n$; this follows trivially from summing the indices and recalling the largest index used is of order $\log n$.\\ \

\item The original upper bound for the game was of order $n\log n$, and the lower bound was found to be sharp at $n-Z(n)$ in  \cite{BEFM}. The upper bound on the number of moves was improved to $3n-3Z(n)-IZ(n)+1$ in \cite{LLMMSXZ}. Since the order of $Z(n)$ and $IZ(n)$ are both less than $n$, we observe that the upper and lower bounds are both of order $n$.\\ \

\item Finally, several deterministic games have been introduced in \cite{LLMMSXZ}. These are defined in terms of the priority of moves; that is, each move in a strategy will follow whichever move is available and comes first in the ordering of moves.
\begin{itemize}
 \item Combine Largest: adding consecutive indices from largest to smallest, adding 1's, splitting from largest to smallest.
 \item Split Largest: splitting from largest to smallest, adding consecutive indices from largest to smallest, adding 1's.
 \item Split Smallest: splitting from smallest to largest, adding 1's, adding consecutive indices from smallest to largest.
\end{itemize}
It was shown in the same paper that the Combine Largest and Split Largest games both realize the shortest game.
\end{itemize}

\ \\

Since the lower bound of the game has been shown to be sharp, we focus on the upper bound of the game.
One of our main result is a proof of a conjecture from \cite{BEFM} that the longest game on any $n$ is achieved by applying splitting moves whenever possible.
\begin{thm}\label{thm:strategy}
The longest game on any $n$ is achieved by applying split moves or combine 1's (in any order) whenever possible, and, if there is no split or combine 1 move available, combine consecutive indices from smallest to largest.
\end{thm}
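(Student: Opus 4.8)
The plan is to convert the optimization into a question about a single statistic and then close the remaining gap with a local exchange argument.

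First I would track the total number of summands $T=\sum_{i} c_i$, where $c_i$ denotes the number of copies of $F_i$ in the current list. A direct check of the four move types shows that each \emph{combining} move (type (1) and type (2a)) decreases $T$ by exactly $1$, while each \emph{splitting} move (type (2b) and type (2c)) leaves $T$ unchanged. Since every game starts at $T=n$ and ends at the Zeckendorf decomposition with $T=Z(n)$, every completed game uses exactly $n-Z(n)$ combining moves, regardless of strategy. Consequently the length of any game equals $n-Z(n)$ plus its number of splitting moves, so \textbf{maximizing the length is equivalent to maximizing the number of splitting moves}. This reduction removes combining moves from the competition entirely.

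Next I would introduce the index sum $Q=\sum_i i\,c_i$ to see how the splits are constrained. Checking the moves, a type-(2a) combine and a type-(2b) split fix $Q$, a type-(2c) split lowers $Q$ by $1$, and a type-(1) combine applied to $F_{i-1},F_i$ lowers $Q$ by $i-2$. As $Q$ runs from $n$ down to $IZ(n)$, summing these contributions yields
\[
\bigl(\#\,\text{type-(2c) splits}\bigr)+\sum_{\text{type-(1) combines}}(i-2)\;=\;n-IZ(n).
\]
Because $i\ge 2$ for every type-(1) combine, each such move at a large index is paid for by the loss of a type-(2c) split. This identity is what forces the two features of the claimed strategy: to keep the combine contribution small one should perform combine-of-consecutive moves at the \emph{smallest} available index, and to keep the split count high one should never pass up a split. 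Combining this with the first reduction, the total length equals $2n-Z(n)-IZ(n)+s_2-\sum_{\text{type-(1)}}(i-2)$, where $s_2$ is the number of type-(2b) splits, so the target is to maximize $s_2-\sum_{\text{type-(1)}}(i-2)$.

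The remaining and genuinely hard step is to prove that the stated greedy priority actually realizes this maximum. I would do this by a swap/exchange argument, by induction on the length of the longest game from the current position $P$ (a well-founded measure, since $T$ strictly decreases on every combine and the game terminates). Writing $g$ for the move the stated strategy makes and $m$ for the first move of some longest continuation, it suffices to show $M(P_g)\ge M(P_m)$ whenever $g\ne m$, where $M(\cdot)$ is the longest-game length and $P_g,P_m$ are the resulting positions. The substance is a case analysis of how two moves interact: if $g$ is a split or a combine-of-ones, one shows that $g$ and $m$ act on essentially disjoint pairs, or that $g$ only creates new equal or consecutive pairs, so that $m$ (or an equivalent move) survives performing $g$ first and no split is lost; and if only combine-of-consecutive moves are available, one shows that executing the smallest-index one first does not increase $\sum(i-2)$ relative to any other choice, using that the reachable positions are sets of distinct Fibonacci numbers. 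I expect this case analysis --- in particular verifying that eagerly splitting never destroys a future split, and that choosing the smallest consecutive index is never globally worse --- to be the main obstacle, since moves can both create and remove the equal or consecutive pairs that enable later moves, so the commutations must be checked rather than assumed. The ``in any order'' clause then follows from the same exchange lemma, which shows that any two high-priority moves may be interchanged without changing the eventual number of splitting moves.
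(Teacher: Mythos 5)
Your two invariants are correct and give a genuinely clean reduction that the paper's Section~2 argument does not use: every game contains exactly $n-Z(n)$ moves that decrease the summand count (type (1) combines together with combine 1's), so the length is $n-Z(n)$ plus the number of true splits, and the index-sum identity $\#\{\text{type (2c) splits}\}+\sum_{\text{type (1) combines}}(i-2)=n-IZ(n)$ is also right (it is close in spirit to the move-counting linear system the paper sets up in Section~3 for the upper bound, not to its Section~2 proof). But these identities only reformulate the optimization; they do not prove that the greedy priority attains the maximum of $s_2-\sum(i-2)$. You explicitly defer that step---the exchange inequality $M(P_g)\ge M(P_m)$---calling it the main obstacle, and that step is not a technicality: it is the entire content of the paper's proof (its Lemmas 2.1--2.3), which occupy several pages of case-by-case path surgery. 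As it stands, the proposal is a plan plus a correct bookkeeping lemma, not a proof.

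Moreover, the local commutation you sketch (``$g$ and $m$ act on essentially disjoint pairs, or $g$ only creates new equal or consecutive pairs, so $m$ survives'') is insufficient in exactly the case the paper isolates as hardest (its Case 2.3 inside Lemma 2.3): the competing move $m$ is a combine $C_i$ with exactly one $F_{i-1}$ and one $F_i$ on the board, while the strategy's split $g$ sits at a distant index. There the question is not whether $m$ survives performing $g$ first, but whether the pair $F_{i-1},F_i$ should be combined now or held back, and the answer depends on the whole future of the game, since later splits can create additional copies of $F_{i-1}$ or $F_i$. The paper needs an auxiliary result (all strategy-following continuations from a fixed state have equal length, its Lemma 2.2) together with a delicate auxiliary path $P''$ that postpones $S_{i+1}$ moves to resolve this; likewise its Case 1.2 (a non-smallest combine chosen when no splits exist) requires tracking the two paths through an unbounded number of subsequent forced splits, not a one-step swap. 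So your induction-plus-exchange skeleton is the right shape, and your invariants would even shorten parts of the write-up, but the nonlocal exchange cases are the crux of the theorem and are missing.
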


This algorithm is not deterministic. Thus, there are many game paths that follow this algorithm.
For instance, it can be easily shown that the Split Smallest game described in \cite{LLMMSXZ} is a deterministic example of this algorithm, and therefore realizes the longest game.

Now that we have an algorithm that achieves the longest game, we are interested in the upper bound of the game length.
The previous upper bound was already very close to the known lower bound (both of order $n$). Nevertheless, we are able to further close the gap.

\begin{thm}\label{thm:bound}
Let $a_i = F_{i+2} - i - 2\: (i>0,\:i\in\mathbb{N})$. The upper bound of the game is given by $\sum_{i=1}^{i_{\max}(n)}a_i\ \delta_i$
which is at most $\frac{\sqrt{5}+3}{2}\ n\ -\ IZ(n)\ -\ \frac{1+\sqrt{5}}{2}Z(n)$.
\end{thm}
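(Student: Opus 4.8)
The plan is to establish the two inequalities in turn, and the first — that \emph{every} play of the game, and in particular the longest, uses at most $\sum_i a_i\delta_i$ moves — I would prove with a single linear monovariant, rather than by appealing to Theorem~\ref{thm:strategy}. Writing $c_i$ for the number of copies of $F_i$ currently in the list, I set
\[
\Phi \;=\; \sum_i c_i\,a_i, \qquad a_i = F_{i+2}-i-2 .
\]
The reason for using the $a_i$ themselves as weights is that the starting list $\{F_1^n\}$ has $\Phi = n\,a_1 = 0$, since $a_1 = F_3-3 = 0$. Thus the number of moves in any play ending at a terminal list $T$ is bounded by $\Phi(T)$, provided every legal move raises $\Phi$ by at least $1$.

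Verifying that last claim is the heart of the argument, and it is a direct computation from the closed form of $a_i$ (equivalently, from the recurrences $a_2 = 2a_1+1$, $a_3 = 2a_2-a_1+1$, and $a_{i+1}=2a_i-a_{i-2}+1$ that these numbers satisfy). The three moves that preserve the number of summands each raise $\Phi$ by exactly $1$: the move $F_1\wedge F_1\to F_2$ by $a_2-2a_1$, the move $F_2\wedge F_2\to F_1\wedge F_3$ by $a_1+a_3-2a_2$, and the move $F_i\wedge F_i\to F_{i-2}\wedge F_{i+1}$ by $a_{i-2}+a_{i+1}-2a_i$. The remaining move, the combine $F_{i-1}\wedge F_i\to F_{i+1}$, raises $\Phi$ by $a_{i+1}-a_i-a_{i-1}$, which collapses via $F_{i+1}=F_i+F_{i-1}$ to exactly $i\ge 2$. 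Hence every legal move increases $\Phi$ by at least $1$; since the game terminates at the Zeckendorf decomposition, where $\Phi = \sum_i\delta_i a_i$, the number of moves is at most $\sum_i a_i\delta_i$. (The slack between this bound and the true longest game is precisely the sum of $i-1$ over the combine-consecutive moves actually used, which is why the estimate is an upper bound and not an identity.)

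For the second inequality I would switch to Binet's formula. With $\phi=\tfrac{1+\sqrt5}{2}$ and $\psi=\tfrac{1-\sqrt5}{2}$ the roots of $x^2=x+1$, the convention $F_1=1,F_2=2,\dots$ gives $F_i=(\phi^{i+1}-\psi^{i+1})/\sqrt5$, and a one-line computation using $\phi^2-\psi^2=\sqrt5$ yields $F_{i+2}-\phi^2F_i=\psi^{i+1}$. Because $\psi^{i+1}\le \psi^2=\phi^{-2}=2-\phi$ for every $i\ge 1$ (with equality only at $i=1$), this gives the termwise estimate $a_i = F_{i+2}-i-2\le \phi^2 F_i - i - \phi$. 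Summing against $\delta_i\ge 0$ and using $n=\sum_i\delta_iF_i$, $IZ(n)=\sum_i i\,\delta_i$, and $Z(n)=\sum_i\delta_i$ then gives
\[
\sum_i a_i\delta_i \;\le\; \phi^2 n - IZ(n) - \phi Z(n) \;=\; \frac{\sqrt5+3}{2}\,n - IZ(n) - \frac{1+\sqrt5}{2}Z(n),
\]
as desired. The only genuine difficulty is locating the weights $a_i$ for the monovariant; once they are in hand, the first half reduces to the Fibonacci recurrence and the second to the elementary bound $\psi^{i+1}\le\psi^2$.
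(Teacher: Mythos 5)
Your proposal is correct, and its first half takes a genuinely different route from the paper. The paper derives the bound by bookkeeping on move counts: for each index $i$ it writes a conservation equation saying the moves' net effect on the number of $F_i$'s equals $\delta_i$, assembles these into a linear system in the variables $MC_i, MS_i$, row-reduces, computes the entries of $A^{-1}$ by induction ($a_{1,j}=F_{j+1}-1$), and arrives at the exact game length $\sum_j a_j\delta_j - MC_2-2MC_3-\cdots-(i_{\max}(n)-2)MC_{i_{\max}(n)-1}$, after which the non-negative $MC$ terms are dropped. You instead check directly that $\Phi=\sum_i c_i a_i$ is a monovariant: it starts at $0$ (since $a_1=0$), rises by exactly $1$ on every split and combine-1 move, and by exactly $i\ge 2$ on a combine at index $i$ (your computations $a_{i-2}+a_{i+1}-2a_i=1$ and $a_{i+1}-a_i-a_{i-1}=i$ are both right), so the move count is at most $\Phi$ at termination, namely $\sum_i a_i\delta_i$. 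This recovers the same exact-length identity — your slack of $i-1$ per combine move is precisely the paper's $\sum_{i\ge2}(i-1)MC_i$ — in a few lines with no linear algebra, and it makes transparent that the bound is attained exactly when the game uses only splits and combine 1's, which the paper also observes. What the paper's heavier machinery buys is that the weights $a_i$ are \emph{derived} (they fall out of inverting the system) rather than posited in advance, which, as you acknowledge, is the one piece of magic your argument requires. Your second half — Binet's formula, the identity $F_{i+2}-\phi^2 F_i=\psi^{i+1}$, the bound $\psi^{i+1}\le\psi^2=2-\phi$, and summing $a_i\le\phi^2 F_i-i-\phi$ against $\delta_i$ — is essentially identical to the paper's.
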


It was originally conjectured in \cite{BEFM} that the number of moves in the Split Smallest game grows linearly with $n$, with a constant of the golden mean squared, which is equivalent to $\frac{\sqrt{5}+3}{2}$. We observe that this conjecture has been shown here, since the order of $Z(n)$ and $IZ(n)$ are less than $n$.

Though this bound is very close to the actual longest game, it is not a strict upper bound for most $n$.
In fact, we observed during the construction of this upper bound that this bound is sharp if and only if the game on $n$ can be played with only split and combine 1 moves, and identified all such $n$.
\begin{thm}\label{thm:splitGame}
A game can be played with only splitting and combine 1 moves, if and only if $n\ =\ F_k-1\ (k\geq2)$.
\end{thm}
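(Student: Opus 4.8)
The plan is to recast the statement as a reachability question for a restricted rewriting system. The only move \emph{excluded} from ``splits and combine $1$'s'' is the combine-consecutive move $F_{i-1}\wedge F_i\to F_{i+1}$, and this is exactly the unique move available at a state all of whose tokens are distinct Fibonacci numbers but which is not the Zeckendorf decomposition: by uniqueness of Zeckendorf such a state must contain a consecutive pair, and having no repeated token it admits neither a split nor a combine $1$. Conversely, any state with a repeated token $F_i$ admits a restricted move (combine $1$ if $i=1$, a split if $i\ge 2$). Thus the restricted moves are available precisely at non-distinct states, and they get ``stuck'' precisely at non-Zeckendorf distinct-token states. So a game can be played with only splits and combine $1$'s if and only if, starting from $\{1^n\}$, one can reach $Z(n)$ (the unique distinct-token state with no consecutive pair) without first reaching a stuck state.

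First I would show that the restricted system is \emph{confluent}. Any two simultaneously available restricted moves either act on disjoint tokens, in which case they plainly commute, or they compete for a common pile of equal tokens, in which case they produce the same multiset; this gives local confluence. Combine $1$'s consume $F_1$'s while splits consume $F_i$ with $i\ge 2$, so these two move types never interfere. Since every legal play of the full game terminates, every play using only restricted moves terminates as well, and Newman's lemma then yields a unique normal form: from $\{1^n\}$ there is a well-defined terminal distinct-token state $T(n)$, independent of the choices made. Consequently a split/combine-$1$ game exists \emph{if and only if} $T(n)=Z(n)$, and it suffices to determine $T(n)$.

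The heart of the argument, and the step I expect to be the main obstacle, is the key lemma that $T(n)$ never has two consecutive \emph{absent} indices below its largest index; equivalently, $T(n)$ is the \emph{lazy} representation $L(n)$ of $n$, the representation as a sum of distinct Fibonacci numbers characterized dually to Zeckendorf by this no-two-consecutive-gaps property (which singles out $L(n)$ uniquely among all such representations). Because the dynamics push tokens toward small indices (splits create $F_{i-2}$ and combine $1$'s create $F_2$), the terminal state should densify toward the bottom; to prove this rigorously I would analyze a convenient deterministic strategy, say ``combine $1$'s eagerly, otherwise split the smallest repeated pile,'' and show by induction on the largest index present that its normal form has no two consecutive absent indices. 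By confluence this normal form is $T(n)$, so $T(n)=L(n)$. Verifying that the densification can always be carried out without stranding a consecutive pair (equivalently, that no reverse breakdown of $L(n)$ to $\{1^n\}$ is obstructed) is the delicate bookkeeping here.

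Finally I would close the loop with a purely number-theoretic step. Granting $T(n)=L(n)$, we have $T(n)=Z(n)$ exactly when $Z(n)=L(n)$, i.e.\ when the index set of $Z(n)$ has \emph{neither} two consecutive present indices (automatic for Zeckendorf) \emph{nor} two consecutive absent indices. A subset of $\{1,\dots,m\}$ with maximum $m$ avoiding both patterns must be the alternating set $\{m,m-2,m-4,\dots\}$ ending at index $1$ or $2$, and summing the resulting alternating Fibonacci series shows its value is exactly $F_{m+1}-1$. Hence $T(n)=Z(n)$ if and only if $n=F_k-1$ with $k=m+1\ge 2$, which is the claim; equivalently, this is the classical fact that $n$ admits a unique representation as a sum of distinct Fibonacci numbers precisely when $n=F_k-1$, the lazy and Zeckendorf representations coinciding exactly in that case.
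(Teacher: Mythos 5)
Your route is genuinely different from the paper's, and its two outer steps are sound. The paper proves the two directions by direct game arguments: for $n=F_k-1$, Lemma~\ref{lem:splitgame1} shows every non-final state of the game has a repeated summand (using that the Zeckendorf decomposition of $F_k-1$ is the alternating set $F_{k-1}+F_{k-3}+\cdots$, so a hypothetical all-distinct play would end with a combine forcing a repeat), and for all other $n$, Lemma~\ref{lem:splitgame2} assumes a restricted play exists and derives a contradiction through a case analysis on the bottom of the Zeckendorf decomposition together with a gap invariant (its Claim 1: split/combine-$1$ moves never create a gap larger than $3$). You replace all of this with rewriting theory, and that part is correct: moves at distinct indices never consume each other's input tokens, so the restricted system has the diamond property; it terminates because every restricted play is a play of the full game, which terminates by \cite{BEFM}; Newman's lemma then gives a unique normal form $T(n)$, so ``some restricted play reaches $Z(n)$'' is equivalent to ``every restricted play does,'' i.e.\ to $T(n)=Z(n)$. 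Your closing step is also correct: an index set with largest element $m$ avoiding both two consecutive present and two consecutive absent indices must be $\{m,m-2,m-4,\dots\}$ ending at $1$ or $2$, and its sum is $F_{m+1}-1$. This is an attractive, more conceptual architecture (and it localizes the game theory in one place), though note the paper's Lemma~\ref{lem:splitgame1} proves something slightly stronger, namely that restricted play succeeds from \emph{any} game state when $n=F_k-1$, not just from $\{1^n\}$.

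The gap is the middle step, and it is not a small one. The identity $T(n)=L(n)$ --- that the normal form is the lazy representation, the unique representation by distinct Fibonacci numbers with no two consecutive absent indices below the largest --- is exactly where the combinatorial content of Theorem~\ref{thm:splitGame} lives; it is your stand-in for both of the paper's lemmas. Your proposal does not prove it: it names a deterministic strategy, invokes ``induction on the largest index present,'' and concedes the bookkeeping is delicate, so as written you have reduced the theorem to an unproven claim of comparable depth. I would point out that your own confluence observation gives a cleaner road than the one you sketch: since the normal form is independent of the play, $T(n+1)$ equals the normal form of $T(n)\cup\{F_1\}$ (first normalize $n$ of the $n+1$ ones, then continue), so induction on $n$ reduces the lemma to a single self-contained carry-propagation computation --- adding an extra $F_1$ to a lazy representation and resolving the resulting repeats by combine-$1$/split moves must terminate at the lazy representation of $n+1$. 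That cascade analysis (which can double back, e.g.\ a split at $i$ recreates an $F_{i-2}$ and may retrigger lower splits) still has to be carried out, together with the uniqueness of the lazy representation that you invoke but do not prove. With those supplied, your argument is a complete and arguably more illuminating proof than the paper's; without them, it is an outline.
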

In Section \ref{sec:strategy} we prove Theorem \ref{thm:strategy}. Then in Section \ref{sec:upper bound} we prove Theorems \ref{thm:bound} and \ref{thm:splitGame}. Finally in Section \ref{sec:conclusion} we give some possible directions for future research.

\section{Strategy to achieve the longest game}\label{sec:strategy}
We start by introducing some notation that we use in our proofs.

Following the notation introduced in \cite{LLMMSXZ},
we let $MC_i$ denote the number of combining moves at the index $i$ with $i\geq2$, with $MC_1$ the number of combine 1 moves. Similarly the number of splitting moves at $i$ is denoted $MS_i$ for $i\geq2$. We refer to combining moves at $i$ by $C_i$, and splitting moves at $i$ by $S_i$.

The move of adding 1's is usually considered a combining move as the case in \cite{BEFMgen,BEFM,LLMMSXZ}, but for the sake of our proofs, we consider combine 1 ($C_1$) to be a splitting move, and also refer to it as $S_1$ in this section.

We begin with the proof of Theorem \ref{thm:strategy}, which is a greedy algorithm that achieves the longest game path, starting with two lemmas.

As a reminder, the algorithm requires moves to be in the following order: choose any split or combine $1$ move whenever possible, then combine consecutive terms with smallest indices.
\begin{lem}\label{lem:findSameLen}
If the aforementioned strategy gives us choice of moves at some game state $G$, then starting from this game state, no matter which move we choose at this step, there exists a path that follows our strategy and has the same length as our initial path.
\end{lem}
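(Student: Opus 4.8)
The plan is to prove the lemma by a local commutativity (``diamond'') argument threaded through an induction on the length of the comparison path. First I would observe that a genuine \emph{choice} under the strategy can only occur among split and combine-$1$ moves: when no such move is available the strategy forces the combine $C_i$ of smallest index $i$, and since a combine at a fixed index acts on the multiset in a single well-defined way, no choice remains. So it suffices to treat the case where $G$ admits two distinct split or combine-$1$ moves $m_1$ and $m_2$, and to show that any strategy-path beginning with $m_1$ can be matched in length by one beginning with $m_2$.

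The key structural step is a commutation fact. Each split or combine-$1$ move changes the vector of multiplicities $(\dots,\#F_i,\dots)$ by adding a \emph{fixed} integer vector: $C_1$ adds $-2e_1+e_2$, the move $S_2$ adds $e_1-2e_2+e_3$, and $S_i$ for $i\ge 3$ adds $e_{i-2}-2e_i+e_{i+1}$. In every case the only negative coordinate sits at the move's own index. Since two distinct priority moves live at distinct Fibonacci indices, neither decreases the coordinate the other consumes; hence each remains available after the other is played, and, because vector addition commutes, playing $m_1$ then $m_2$ reaches exactly the same state $G'$ as playing $m_2$ then $m_1$. Both one-step detours consist of legal split/combine-$1$ moves, so this closes a diamond $G\to G_1\to G'$ and $G\to G_2\to G'$ using only strategy-valid moves. (As a consistency check, this forces any $G$ with two priority choices to be at least two moves from termination.)

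With the diamond in hand I would induct on the length $L$ of the given strategy-path $P$ from $G$, whose first move is $m_1$. Let $G_1$ be the state after $m_1$ and $P_1$ the tail of $P$ (length $L-1$). If the first move of $P_1$ is already $m_2$, then $P_1$ passes through $G'$, and I simply reroute through $G_2$: the path $G\to_{m_2}G_2\to_{m_1}G'$ followed by the remainder of $P$ after $G'$ has length $L$ and begins with $m_2$. Otherwise $m_2$ is still an available priority move at $G_1$ while $P_1$ opens with a different move; here I apply the inductive hypothesis at $G_1$ (whose comparison path $P_1$ is strictly shorter) to produce a strategy-path $\tilde P$ from $G_1$ of the same length $L-1$ that begins with $m_2$, hence passes through $G'$. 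Splicing $G\to_{m_2}G_2\to_{m_1}G'$ onto the portion of $\tilde P$ after $G'$ again yields a length-$L$ strategy-path starting with $m_2$.

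I expect the main obstacle to be the bookkeeping in this second case of the induction: one must be certain that $m_2$ genuinely survives as a \emph{strategy-legal} move at $G_1$ (guaranteed by the ``negative coordinate only at one's own index'' observation) and that invoking the inductive hypothesis there neither shortens nor lengthens the game. The translation-commutativity of the moves is precisely what makes the diamond close to an \emph{identical} state and makes the inductive splice preserve length exactly; isolating and proving that fact cleanly, together with verifying that the forced-combine regime never hides a choice, is the real content, while the remaining reroute-and-splice steps are routine.
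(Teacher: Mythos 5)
Your proposal is correct, and it rests on exactly the same core observation as the paper's proof: a split or combine-$1$ move has its only negative effect at its own index, so two distinct priority moves never disable one another and commute as translations of the multiplicity vector. Where you differ is the global architecture. The paper argues by direct construction: the alternative path $P'$ plays $S_j$, then $S_i$, then imitates $P$ move-for-move until $P$ itself plays $S_j$, which $P'$ skips, after which the two paths coincide; since the two paths then consist of the same multiset of moves, they have equal length. That construction needs a separate argument that $P$ really does play $S_j$ at some point: the only moves decreasing the number of $F_j$'s are $S_j$, $C_j$, $C_{j+1}$, and the strategy's split-priority forbids the two combines while two copies of $F_j$ remain. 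Your induction on the length of $P$, with a diamond swap at each level, never has to state that fact explicitly --- it is absorbed into the well-foundedness of the recursion, since your Case 2 cannot persist to the end of the game ($m_2$ survives every priority move, so the game cannot terminate while $m_2$ is still pending). The trade-off: the paper's proof is a single explicit reordering and is shorter once the ``must eventually split at $j$'' claim is in hand, while yours isolates the commutation lemma and the strategy-legality bookkeeping more cleanly, at the cost of running an induction whose (vacuous) base case --- that a choice state admits no strategy path of length one --- you should state explicitly rather than leave as a parenthetical consistency check.
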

\begin{proof}
In paths following our strategy, the only game states that allow choice of moves are game states with at least two splitting (including $S_1$) moves available.

Let $P$ be a game path starting from any game state that follows our strategy, and let $G$ be any game state that $P$ visits that allows a choice.
Let $P'$ be another path that starts from the same game state as $P$ and follows our strategy such that $P'$ follows the same moves as $P$ until they differ in choice of move for the first time at $G$.

Let $P$ choose $S_i$ and $P'$ choose $S_j$ at $G$. We want to construct the steps for $P'$ such that it has same length as $P$.

Since $P'$ could have chosen $S_i$, we must have at least two $F_i$ at $G$. Since $S_j$ does not decrease the number of $F_i$, we still have at least two $F_i$ after this step. Thus for the next step, $P'$ can still choose $S_i$.
By similar reasons, $P'$ can always imitate the moves that $P$ takes after $S_i$ until $P$ takes a $S_j$.
We know that $P$ must take $S_j$ at some step because we had at least two $F_j$ at $G$, and the only moves that decrease the number of $F_j$'s are $S_j$, $C_j$, and $C_{j+1}$. Since our strategy prioritizes split moves over combine moves, we must take $S_j$ at some point in $P$.

Thus the moves in $P'$ after $G$ are as follows: perform $S_j$ and $S_i$ in the first two steps, and then imitate the moves of $P$ after $S_i$ until $P$ takes $S_j$. Since $P$ follows our strategy of prioritizing split  moves, $P'$ does also.
In this way, $P$ and $P'$ take the same set of steps but in different order, so they reach the same game state with the same number of steps. After that, $P$ and $P'$ follow exactly same steps until game terminates.

Thus, we prove that no matter which move we choose at some game state $G$, there exists a path that follows our strategy and has same length as our initial path.
\end{proof}

\begin{lem}\label{lem:split order}
Starting from any game state, all paths that follow our strategy have the same length.
\end{lem}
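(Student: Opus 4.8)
The plan is to bootstrap the global statement from the local exchange property of Lemma \ref{lem:findSameLen} by strong induction on a well-founded measure of game states. Since the Zeckendorf game is known to terminate (see \cite{BEFM}), every legal play from a fixed state has bounded length; define $\mu(G)$ to be the length of the longest legal play starting from $G$, maximized over \emph{all} legal plays, not only those following our strategy. This is finite, and every legal move $G \to G'$ satisfies $\mu(G') \le \mu(G) - 1 < \mu(G)$, since prepending the move $G\to G'$ to a longest play from $G'$ produces a legal play from $G$. Thus $\mu$ provides a well-founded ordering, and I would prove the statement ``all strategy-paths from $G$ have the same length'' for every $G$ by induction on $\mu(G)$.

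For the base case, if $G$ is the Zeckendorf decomposition then $\mu(G) = 0$ and the only strategy-path has length $0$. For the inductive step, fix a non-terminal $G$ and any two strategy-paths $P$ and $Q$ from $G$; the goal is $|P| = |Q|$. First I would treat the case where $P$ and $Q$ begin with the \emph{same} move, arriving at a common state $G'$. Because the strategy depends only on the current state, the suffixes of $P$ and $Q$ after this move are themselves strategy-paths from $G'$, and $\mu(G') < \mu(G)$, so the inductive hypothesis forces these suffixes to have equal length, whence $|P| = |Q|$.

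The remaining case is when $P$ and $Q$ begin with \emph{different} moves. As established in the proof of Lemma \ref{lem:findSameLen}, the only states admitting a choice are those with at least two available split (including $S_1$) moves, so $G$ is necessarily such a choice state. Applying Lemma \ref{lem:findSameLen} with initial path $P$ and the alternative first move that $Q$ takes, I obtain a strategy-path $P'$ whose first move agrees with that of $Q$ and with $|P'| = |P|$. Now $P'$ and $Q$ share their first move, so the same-first-move case above gives $|P'| = |Q|$, and therefore $|P| = |P'| = |Q|$, completing the induction.

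The argument is in essence a confluence (``diamond'') argument, and given Lemma \ref{lem:findSameLen} the substantive work is only in organizing the induction. I expect the point requiring the most care to be verifying that the path produced by Lemma \ref{lem:findSameLen} is a genuine strategy-path, so that the inductive hypothesis may legitimately be reapplied to it, together with confirming that two strategy-paths can diverge only at a choice state, so that Lemma \ref{lem:findSameLen} applies exactly when it is invoked. Both points rest on the observation, used throughout, that the strategy is determined by the current state alone, and hence that every suffix of a strategy-path is again a strategy-path.
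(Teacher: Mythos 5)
Your proposal is correct, and it uses the same key ingredient as the paper---Lemma~\ref{lem:findSameLen}---but the scaffolding around it is genuinely different. The paper argues by contradiction: given two strategy-paths $P$ and $P'$ of different lengths, it applies Lemma~\ref{lem:findSameLen} at their first point of disagreement to produce a path $P_1$ of the same length as $P$ that agrees with $P'$ one step further, then repeats to get $P_2, P_3, \dots$, until the corrected path coincides with $P'$ entirely, the termination of this correction process being justified only by the remark that game paths are finite. You instead run a direct well-founded induction on the measure $\mu(G)$, split on whether the two strategy-paths from $G$ share their first move, and invoke Lemma~\ref{lem:findSameLen} exactly once per inductive step to align the first moves before recursing at the common successor state; this is the shape of the standard proof of Newman's lemma in rewriting theory. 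What your organization buys is rigor exactly where the paper is most informal: the ``repeat this process until\dots'' step is absorbed into the induction hypothesis, so no separate termination argument for the correction procedure is needed. The debts your version takes on are the ones you flag yourself, and they all check out: $\mu$ is finite (cleanest to cite the explicit move-count bounds of \cite{BEFM,LLMMSXZ}, since termination of every individual play would otherwise need K\"onig's lemma together with finite branching); suffixes of strategy-paths are strategy-paths because the strategy is memoryless; and two strategy-paths can first diverge only at a state admitting at least two splitting (including $S_1$) moves, which is precisely the hypothesis under which Lemma~\ref{lem:findSameLen} applies.
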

\begin{proof}
We show that an arbitrary game path $P$ that follows our strategy is no longer or shorter than any other path that follow our strategy, given that they start from same game state.

Suppose for the sake of contradiction that $P$ and $P'$ both follow our strategy but differ in length. Then $P$ and $P'$ must differ by at least $1$ move. Let $G$ be the first game state where $P$ and $P'$ differ in choice of moves. By Lemma \ref{lem:findSameLen}, there exists a path $P_1$ that chooses the same move as $P'$ at $G$, has the same length as $P$, and follows our strategy.

Since $P_1$ and $P'$ differ in length, they must differ by at least one move. Thus there exists a game state $G_1$ after $G$ where $P_1$ and $P'$ differ in their choice for the first time. Again, by Lemma \ref{lem:findSameLen}, there exists a path $P_2$ that chooses the same move as $P'$ at $G_1$, has the same length as $P_1$ (which is equal to length of $P$), and follows our strategy.

Since the number of steps in any game path is finite, we can repeat this process until we find a $P_k$ with the same length as $P$, but there no longer exists $G_k$ where $P'$ and $P_k$ can differ in choice of moves. Thus, the rest of steps are deterministic, which means $P_k$ and $P'$ must be exactly the same path, and therefore both have the same length as $P$, a contradiction.

In conclusion, all paths that start from same game state and follow our strategy have same length.
\end{proof}

\begin{lem}\label{lem:otherPath}
Starting from any game state, if a path does not follow our strategy, then this game path is either not the longest path or there exists a path that has the same length as this path and follows our strategy.
\end{lem}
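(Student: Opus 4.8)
The plan is to treat the nontrivial alternative: assume the given path $P$ is a longest path (if it is not, the first conclusion already holds) and build a strategy-following path of the same length. Let $G$ be the first game state at which $P$ makes a move the strategy forbids, so that $P$ agrees with the strategy before $G$. It then suffices to produce from $P$ a path $P'$ of the same length that performs a strategy-legal move at $G$ as well; since every game is finite, repeating this replacement finitely many times removes all deviations and yields a genuine strategy path whose length equals that of $P$, and Lemma \ref{lem:split order} identifies this length with the common strategy length.

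The engine for each step is a bookkeeping identity. A split $S_i$ with $i\ge 2$ replaces two summands by two summands, while every other move---each combine $C_i$ with $i\ge 2$ and each combine $1$---replaces two summands by one. Since the game begins with $n$ summands and ends with $Z(n)$, the number of non-split moves is exactly $n-Z(n)$ for every path, so the length of any game equals $n-Z(n)$ plus its number of splits. Thus ``longest'' is synonymous with ``largest number of splits,'' and to prove the lemma it is enough to rearrange moves so as never to lose a split. There are exactly two ways $P$ can deviate at $G$: either a split or combine $1$ move is available but $P$ performs a combine $C_k$ ($k\ge 2$), or no split or combine $1$ is available and $P$ performs a combine whose index is not that of the smallest consecutive pair present.

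In the first case let $A$ be the split (or combine $1$) the strategy would play at $G$, witnessed by a pair of equal Fibonacci numbers present at $G$. If $P$ later performs the same move on that pair, I transpose it backwards to $G$: exactly as in Lemma \ref{lem:findSameLen}, each swap past an intervening move is legitimate because that move neither produces nor consumes the specific pair $A$ uses, so the rearranged $P'$ has the same multiset of moves, the same length, and now agrees with the strategy at $G$. If $P$ never performs $A$ on that pair, then when $A$ is a genuine split the count identity forces $P$ to have fewer splits than the path that plays $A$ and continues, so $P$ is not longest; when $A$ is combine $1$, the two copies of $F_1$ must still be eliminated before reaching the Zeckendorf decomposition, and tracing how $P$ eliminates them again yields a legal transposition. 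Making the transposition rigorous when the multiplicity of $F_m$ rises and falls between $G$ and the later split, and confirming that a skipped split can never be recovered elsewhere, is the token-level bookkeeping I expect to be the main obstacle.

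The second case uses the same commuting principle and is easier. The strategy plays the smallest-index combine $C_j$, while $P$ plays some $C_k$ with $k>j$. When $C_j$ and $C_k$ act on disjoint summands they commute, and transposing $C_j$ forward gives a same-length $P'$ agreeing with the strategy at $G$. The only genuinely different situation is the adjacent one, $k=j+1$, where $C_j$ and $C_{j+1}$ compete for a shared $F_j$; here a short direct comparison of the two move-orders shows that playing $C_j$ first forgoes no move and can only create additional splitting opportunities (for instance a second $F_{j+1}$), so it is never worse. In every situation we obtain a path of length at least that of $P$ that agrees with the strategy one step further, and iterating completes the proof.
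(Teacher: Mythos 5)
Your counting identity is correct and is a genuinely different observation from anything in the paper: every combine (including combine $1$) lowers the number of summands by one, while every split $S_i$ ($i\ge 2$) preserves it, so every game on $n$ contains exactly $n-Z(n)$ summand-reducing moves and its length is $n-Z(n)$ plus its number of splits; hence ``longest'' means ``most splits.'' That reframing is attractive, and your overall skeleton (repair one deviation at a time, use finiteness to terminate) matches the paper's. But the identity only restates the problem. The entire content of the lemma is the local repair step, and each of your three repair mechanisms is asserted rather than proved --- and in each case the assertion, as stated, can fail.

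Concretely: (a) Your backward transposition ``exactly as in Lemma \ref{lem:findSameLen}'' does not transfer. That lemma's swaps are legitimate only because \emph{both} paths follow the strategy, so every intervening move is a split, and splits at other indices never consume copies of $F_m$. In your $P$ the moves after the deviation are arbitrary: an intervening $C_m$ or $C_{m+1}$ can consume one of the two $F_m$'s, and after moving the split $S_m$ to the front that intervening combine may no longer be legal, so the transposed sequence is not a game path. (This is also why the paper repairs the \emph{last} deviation rather than the first: it needs the suffix after the deviation to follow the strategy, both for imitation and to invoke Lemma \ref{lem:split order} in its hardest sub-case.) You flag this as ``token-level bookkeeping,'' but it is not bookkeeping; it is the proof. (b) When $P$ never performs the available split $A$, your claim that ``the count identity forces $P$ to have fewer splits than the path that plays $A$ and continues'' is circular: after playing $A$ the state differs from the state after $P$'s combine, so that path cannot simply continue like $P$, and no comparison of split counts follows. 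Establishing that playing the split first never costs a move is exactly what the paper's Cases 2.1--2.3 do by explicit move-surgery (e.g.\ replacing $C_i$ by $S_i$ then $C_{i-1}$, which reaches the same state in one more move), with the genuinely hard sub-case being one $F_{i-1}$, one $F_i$, and the available split at a remote index. (c) In your second case the transposition presupposes that $C_j$ occurs later in $P$, which need not happen ($P$ can consume the $F_j$ by $C_{j+1}$ and never play $C_j$); moreover the crux there is not the adjacency $k=j+1$ you single out but whether $F_{k-2}$ is present: if it is, one plays $C_{k-1}$ and then splits the two resulting $F_k$'s, reaching $P$'s state in two moves instead of one, so the deviation costs a move and $P$ is not longest (the paper's Case 1.1). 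So the split-count framing is a nice start, but all three repair steps need to be replaced by arguments of the kind the paper gives.
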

\begin{proof}
Suppose a game path $P$ contains at least one step that is not chosen by our strategy.
We want to either find a path $P^*$ that is longer than $P$, or construct a $P^*$ that follows our strategy and is as long as $P$.

We look at the last step in $P$ that is not chosen by our strategy. Suppose the step is taken at game state $G$.
Since we consider the last step that does not follow our strategy, all moves after this step must follow our strategy.

There are two situations when a step is not chosen by our strategy: either the combining move taken is not the one with smallest index when no splitting move is available,
or a combining move is taken when there is splitting move available. Note that in splitting move we include combine $1$'s moves, and in combining move we exclude combine $1$'s.
We look at these two cases. In both cases we want to find a path $P'$ that is either longer than $P$ or follows our strategy at and after $G$ and have same length as $P$.

First, suppose $P$ takes a combining move that is not the smallest when no splitting move is available at $G$. In this case, there is at most one $F_k$ for any $k$.
Let $C_i$ ($i\geq2$) be the smallest combining move at $G$ and let $C_j$ ($j>i$) be the combining move chosen by $P$ at $G$. We study the following sub-cases based on whether $G$ contains a $F_{i-2}$ term.
\begin{itemize}
\setlength{\itemsep}{5pt}
    \item \textit{Case 1.1.} At $G$, there is at least one $F_{j-2}$.

    In this case, we find a path $P'$ that is longer than $P$. Let $P'$ take the same moves as $P$ before reaching $G$. At $G$, path $P$ takes $C_j$ at $G$ and reaches $G'$.
    Compared to $G$, $G'$ contains one less $F_{j-1}$, one less $F_j$, and one more $F_{j+1}$.
    Let path $P'$ take $C_{j-1}$ and $S_{j}$.
    Then the game state it reaches also contains one less $F_{j-1}$, one less $F_j$, and one more $F_{j+1}$ compared to $G$. Thus $P'$ also reaches $G'$ with one more step than $P$. After that, $P'$ can imitate the moves $P$ takes until game terminates. In the end, $P'$ is one move longer than $P$.
    \item \textit{Case 1.2.} At $G$, there are no $F_{j-2}$.

    Again, let $P'$ follow same steps as $P$ and reach the game state $G$. At $G$, let $P$ take $C_j$ and $P'$ take $C_i$. After that, $P$ follows our strategy.

    First, we look at the steps in $P$. $C_j$ increases number of $F_{j+1}$ by one, so if there is one $F_{j+1}$ at $G$, we apply $S_{j+1}$. This is the only possible splitting move at this game state since we assumed there are no $F_{j-2}$.
    Similarly, $S_{j+1}$ increases $F_{j+2}$ by one, so if there is one $F_{j+2}$ at $G$, we apply $S_{j+2}$.
    Since we have used up all $F_{j-1}$'s with $C_j$, there are no $F_{j-1}$'s before taking $S_{j+1}$, so we cannot apply $S_{j-1}$. Thus $S_{j+2}$ should be the only possible splitting move at this game state.

    We repeat the process of taking the only splitting move until we have to do a combining move.
    Note that the number of such splitting moves that can be taken by $P$ after taking $C_j$ and before taking another combining move depends on the number of consecutive $F_k$'s ($k>j$) starting from $F_{j+1}$. Let $\alpha\geq 0$ be the number of such moves. The move taken in $P$ after these $\alpha$ steps is $C_i$ since there is no splitting move available and $C_i$ is the smallest combining move.

    Now $P$ has taken $1+\alpha+1$ steps and $P'$ has taken 1 step.
    The current game state for $P$ and $P'$ has the same number of $F_k$'s for all $k\leq j-3$ since $S_j$ and the $\alpha$ splitting moves in $P$ do not affect the number of $F_k$'s ($k\leq j-3$), and $P$ and $P'$ both take a $C_i$ move.
    Thus, they can follow the same steps until either we get a $F_{j-2}$ before we have to take $C_j$ in $P'$ or we never get a $F_{j-2}$ and the next step in $P'$ is $C_j$. Suppose we took $\beta$ steps before we stop. We look at these two cases.

    \vspace{5pt}
    \begin{itemize}
    \setlength{\itemsep}{5pt}
        \item \textit{Case 1.2.1.} We get a $F_{j-2}$ before we have to take $C_j$ in $P'$.

        This case is similar to Case 1.1 where we find a path longer than $P$. In $P'$ we take $C_{j-1}$, $S_j$, and then follow the $\alpha$ steps described in $P$. After that, $P$ and $P'$ reach the same game state. Notice that after $G$, path $P$ took $1+\alpha+1+\beta$ steps, and $P'$ took $1+\beta+2+\alpha$ steps. Thus, by the end of the game, $P'$ is one move longer than $P$.

        \item \textit{Case 1.2.2.} We never get a $F_{j-2}$ before the next step in $P'$ is $C_{j}$.

        In this case $P'$ takes the $C_j$ move and then follows the $\alpha$ steps described in $P$. Then $P$ and $P'$ reach the same game state, and share the same steps after that. Notice that the sub-paths of $P$ and $P'$ after $G$ follow our strategy, and $P'$ has the same length as $P$.
    \end{itemize}
\end{itemize}

Second, suppose $P$ chooses a combining move when there is a splitting move available. Let $C_i$ ($i\geq 2$) be the combining move that $P$ chooses. Then there must be at least one $F_{i-1}$ and one $F_{i}$ at $G$. We consider the following cases based on the number of $F_{i-1}$'s and $F_{i}$'s.
\begin{itemize}
    \setlength{\itemsep}{4pt}
    \item \textit{Case 2.1.} There are more than one $F_{i-1}$'s at $G$ (i.e., $P'$ can take the move $S_{i-1}$).
    \begin{itemize}
        \setlength{\itemsep}{5pt}

        \item \textit{Case 2.1.1.} $i=2$ (i.e., path $P$ takes $C_2$).

        At $G$, path $P$ takes $C_2$, and path $P'$ takes $S_1$ and $S_2$ to reach the same game state. After that $P'$ imitates the steps $P$ takes. In the end, $P'$ is one move longer than $P$.
        \item \textit{Case 2.1.2.} $i=3$ (i.e., path $P$ takes $C_3$).

        At $G$, path $P$ takes $C_3$, and path $P'$ takes $S_2$, $S_3$, $S_1$ to reach the same game state. After that $P'$ imitates the steps $P$ takes. In the end, $P'$ is two moves longer than $P$.
        \item \textit{Case 2.1.3.} $i>3$.

        At $G$, path $P$ takes $C_i$, and path $P'$ takes $S_{i-1}$, $S_i$, $C_{i-2}$ to reach the same game state. After that $P'$ imitates the steps $P$ takes. In the end, $P'$ is two moves longer than $P$.
    \end{itemize}
    \item \textit{Case 2.2.} There is exactly one $F_{i-1}$ and more than one $F_i$ at $G$ (i.e., $P'$ can take $S_i$).
    \begin{itemize}
        \setlength{\itemsep}{5pt}

        \item  \textit{Case 2.2.1.} $i=2$ (i.e., path $P$ takes $C_2$).

        At $G$, path $P$ takes $C_2$, and path $P'$ takes $S_2$ and $S_1$ to reach the same game state. After that $P'$ imitates the steps $P$ takes. In the end, $P'$ is one move longer than $P$.
        \item  \textit{Case 2.2.2.} $i>2$.

        At $G$, path $P$ takes $C_i$, and path $P'$ takes $S_i$ and $C_{i-1}$ to reach the same game state. After that $P'$ imitates the steps $P$ takes. In the end, $P'$ is one move longer than $P$.
    \end{itemize}
    \item \textit{Case 2.3.} There is exactly one $F_{i-1}$ and one $F_i$ at $G$.

    Let $G'$ be the game state $P$ reaches after taking $C_i$ at $G$. Since $G$ is the last game state where our strategy is violated, all steps in $P$ after $G$ follows our strategy.
    We proved in Lemma \ref{lem:split order} that starting from any game state, any game path that follows our strategy has same length. Thus, there exists a path that starts from $G'$, performs $S_{i+1}$ only when no other splitting move is available, and has the same length as the sub-path of $P$ starting from $G'$.
    We extend this path so that it starts from the same game state as $P$ and follows the same steps as $P$ until $G'$.
    Call the extended path $P''$.

    Note that the only differences between $G$ and $G'$ are that $G'$ has one more $F_{i+1}$ term, one less $F_i$ term and one less $F_{i-1}$ term.
    If we let $P'$ follow the same moves after $G$ that $P$ does after $G'$, then the game states in $P'$ and $P$ always differ in only these three terms.
    Thus, $P'$ may be unable to imitate $P$ if $P$ performs $S_{i+1}$.

    We avoid this problem to the maximum extent by letting $P'$ follow the same steps in $P''$ (instead of $P$) until either $P''$ is forced to take $S_{i+1}$ and $P'$ cannot choose the same step, or $P'$ performs all splitting moves before $P''$ takes a combining move.
    In both situations,
    let $G''$ be the game state reached by $P''$ here and consider the next step in $P'$. The next step is either a splitting move or a combining move (there must exist a next step since we can always take $C_i$). Notice that if there is a splitting move possible, it is either $S_{i-1}$ or $S_i$ since $F_{i-1}$ and $F_i$ are the only two terms which game states in $P'$ have more of than game states in $P''$.
    \vspace{4pt}
    \begin{itemize}
    \setlength{\itemsep}{5pt}
        \item \textit{Case 2.3.1.} The next step in $P'$ can be $S_{i-1}$ or $S_i$.

        Suppose the next step taken in $P'$ is $C_i$.
        If the step in $P''$ before reaching $G''$ is $S_{i+1}$ and $P'$ failed to follow this move, $P'$ takes $S_{i+1}$ after the $C_i$ move.
        Then $P'$ reaches $G''$ with same number of steps as $P''$.
        However, according to \textit{Case 2.1} and \textit{Case 2.2}, we can find a path that is longer than $P'$. Thus there exist a path longer than $P$.

        \item \textit{Case 2.3.2.} The next step in $P$ can only be a combining move.

        Again, suppose the next step taken in $P'$ is $C_i$ (and $S_{i+1}$ if $P'$ failed to follow the $S_{i+1}$ move in $P''$). Then, it reaches $G''$ with same number of steps as $P''$. Let $P'$ follow the same moves in $P''$ after $G''$.

        If $C_i$ was not the smallest possible combining move $P'$ could have taken, then by our discussion about the case where the smallest combining move is not chosen, we know that there exists either a path longer than $P'$ or a path that has same length as $P'$ and its sub-path (after $G$) follows our strategy.

        If $C_i$ was the smallest possible combining move, then this step followed our strategy.
        Thus $P'$ is a path that has the same length as $P$ and whose sub-path (after $G$) follows our strategy.
    \end{itemize}
    In both cases, we can either find a path that is longer than $P$ or a path whose sub-path after $G$ follows our strategy and have same length as $P$.
\end{itemize}

Now we can start to construct a path $P^*$ that is either longer than $P$ or has same length as $P$ and follows our strategy.

Since the game takes finitely many steps, there are finitely many game states in $P$ that do not follow our strategy in choosing moves. We denote all such game states in $P$ in reverse chronological order as $G_1,G_2,\dots,G_k$, where $G_1$ is the last game state where a move is not chosen in accordance with our strategy and $G_k$ is the first.

We start by looking at the path after $G_1$. Since our strategy is not followed at $G_1$, it must fall into either of the two cases discussed above. In both cases, we can either find a path that is longer than $P$ or find a path whose sub-path after $G_1$ always follow our strategy and is as long as $P$.

If we find a path that is longer than $P$, then this is the $P^*$ that we are looking for.
Otherwise, we denote the path whose sub-path after $G_1$ follows our strategy as $P_1$.
Notice that $P$ and $P_1$ follow same moves before reaching $G_1$, and $P_1$ follows our strategy at  $G_1$, so the last game state in $P_1$ where our strategy is violated is $G_2$.

By the same argument, we can either find a path that is longer than $P_1$ or find a path whose sub-path after $G_2$ always follow our strategy and is as long as $P_1$.
If we find a path that is longer than $P_1$, then this is the $P^*$ that we are looking for. Otherwise, we find a $P_2$ whose sub-path after $G_2$ follows our strategy and has same length as $P_1$.

We repeat this process until either we find a $P^*$ that is longer than $P$, or we find a $P_k$ whose sub-path after $G_k$ follows our strategy and has the same length as $P$.
Since $G_k$ is the first game state in $P$ where our strategy is not followed, $P_k$ is a path that follows our strategy from the starting state. Thus $P_k$ is the $P^*$ we want.

In conclusion, starting from any game state, if there is a path $P$ that does not follow our strategy, then it is either not the longest game or there exists a path $P'$ that is as long as $P$ and follows our strategy.
\end{proof}

\begin{proof}[Proof of Theorem \ref{thm:strategy}]
By Lemma \ref{lem:otherPath}, we proved that a path that does not follow our strategy is either not the longest game, or there exists a path that follows our strategy and is as long as the original path. By Lemma \ref{lem:split order}, we know that all paths that start from same game state and follow our strategy have the same length. Thus our strategy gives the longest game.
\end{proof}


\section{Upper Bound on the Game Length}\label{sec:upper bound}
In this section, we construct and analyze the order of an upper bound on the game length.
\begin{proof}[Proof of Theorem \ref{thm:bound}]
The first step is to construct an upper bound on the game length.
To do this, we first look at changes in the amount of $F_2$. We start the game with no $F_2$, and end with $\delta_2$ of $F_2$. Every time we combine two $F_1$'s, we get a $2$ ($F_1\wedge F_1\rightarrow F_2$) and increase the number of $F_2$'s by one. Every time we split two $F_4$'s, we get a $2$ ($F_4 \wedge F_4\rightarrow F_2\wedge F_5$) and increase the number of $F_2$'s by one. These are the only moves that increase the number of $F_2$'s.
Each combining move of $F_1$ and $F_2$ ($F_1\wedge F_2\rightarrow F_3$) and combining move of $F_2$ and $F_3$ ($F_2\wedge F_3\rightarrow F_4$) decreases the number of $F_2$'s by one. Finally splitting two $F_2$'s ($F_2\wedge F_2\rightarrow F_1\wedge F_3$) decreases the number of $F_2$'s by two. These are the only moves that decrease the number of $F_2$'s.

Thus we can construct the following equation:
\begin{equation}
MC_1-2MS_2+MS_4-MC_2-MC_3=\delta_2.
\end{equation}

Similarly, for each $3\leq i\leq i_{\max}(n)$, we start the game with no $F_i$, and end with $\delta_i$ of the $F_i$. Every time we combine $F_{i-2}$ and $F_{i-1}$, we increase the number of $F_i$'s by one. Every time we split two $F_{i-1}$'s, we increase the number of $F_i$'s by one. Every time we split two $F_{i+2}$'s, we increase the number of $F_i$'s by one. These are the only moves that increase the number of $F_i$'s.
Each combining move of $F_{i-1}$ and $F_{i}$ and combining move of $F_{i}$ and $F_{i+1}$  decreases the number of $F_i$'s by one. Finally splitting two $F_{i}$'s decreases the number of $F_i$'s by two. These are the only moves that decrease the number of $F_i$'s.

Thus we have for $3\leq i\leq i_{\max}(n)$
\begin{equation}
MS_{i-1}-2MS_{i}+MS_{i+2}+MC_{i-1}-MC_{i}-MC_{i+1}\ =\ \delta_i .
\end{equation}

Since $i_{\max}(n)$ is the largest index in the final decomposition, we know that for all $i\geq i_{\max}(n)$, $MS_i=MC_i=0$.
Thus for $i\geq i_{\max}(n)-2$, we can get rid of a few terms in the equation above:
\begin{equation}
MS_{i_{\max}(n)-3}-2MS_{i_{\max}(n)-2}+MC_{i_{\max}(n)-3}-MC_{i_{\max}(n)-2}-MC_{i_{\max}(n)-1}=\delta_{i_{\max}(n)-2},
\end{equation}
\begin{equation}
MS_{i_{\max}(n)-2}-2MS_{i_{\max}(n)-1}+MC_{i_{\max}(n)-2}-MC_{i_{\max}(n)-1}\ =\  \delta_{i_{\max}(n)-1},
\end{equation}
\begin{equation}
MS_{i_{\max}(n)-1}+MC_{i_{\max}(n)-1}\ =\ \delta_{i_{\max}(n)}.
\end{equation}

Now we have $i_{\max}(n)-1$ linear equations with $2\cdot i_{\max}(n)-3$ variables, we write the system of equations in matrix form.
\begin{equation}\label{eq:syseq}
\left[
\begin{tabular}{c c c c c c c c c c c c c}
1 & -2 & 0 & 1 & $\cdots$ & 0 & 0 & -1 & -1 & 0 & $\cdots$ & 0 & 0 \\
0 & 1 & -2 & 0 & $\cdots$ & 0 & 0 & 1 & -1 & -1 & $\cdots$ & 0 & 0 \\
0 & 0 & 1 & -2 & $\cdots$ & 0 & 0 & 0 &  1 & -1 & $\cdots$ & 0 & 0 \\
$\vdots$&&&&$\cdots$ & & $\vdots$&$\vdots$&&& $\cdots$ & &$\vdots$\\
0 & 0 & 0 & 0 & $\cdots$ & 1 & -2 & 0 & 0 & 0 & $\cdots$ & 1 & -1 \\
0 & 0 & 0 & 0 & $\cdots$ & 0 & 1 & 0 & 0 & 0 & $\cdots$ & 0 & 1 \\
\end{tabular}
\right]\!\!\!
\left[
{\begin{tabular}{c}
$MC_1$ \\
$MS_2$ \\
$MS_3$ \\
$\vdots$ \\
$MS_{i_{\max}(n)-1}$ \\
$MC_2$ \\
$\vdots$ \\
$MC_{i_{\max}(n)-1}$
\end{tabular}}
\right]\!
=\!
\left[
\begin{tabular}{c}
$\delta_2$ \\
$\delta_3$ \\
$\delta_4$ \\
$\vdots$ \\
$\delta_{i_{\max}(n)-1}$ \\
$\delta_{i_{\max}(n)}$ \\
\end{tabular}
\right]
\end{equation}

Let $M$ be the $(i_{\max}(n)-1)\times (2\cdot i_{\max}(n)-3)$ matrix shown in (\ref{eq:syseq}). We express each entry $m_{i,j}$ in $M$ explicitly:
\be
m_{i,j}=
\begin{cases}
1 & \text{if }j=i\\
-2 & \text{if }j=i+1 \text{ and } j\leq i_{\max}(n)-1\\
1 & \text{if }j=i+3 \text{ and } j\leq i_{\max}(n)-1\\
1 & \text{if } j=i+i_{\max}(n)-2 \text{ and } j\geq 1\\
-1 &\text{if } j=i+i_{\max}(n)-1 \text{ and } j\leq 2\cdot i_{\max}(n)-3 \\
-1 &\text{if } j=i+i_{\max}(n) \text{ and } j\leq 2\cdot i_{\max}(n)-3\\
0 & \text{otherwise.}
\end{cases}
\ee
Let $A$ be the left $(i_{\max}(n)-1)\times (i_{\max}(n)-1)$ sub-matrix of $M$. Notice that $A$ is upper triangular and has 1's in the diagonal, so it is invertible.

To solve the equation in (\ref{eq:syseq}), we write $M$ in reduced row-echelon form:
\begin{equation}
\left[\small
\small\begin{tabular}{c c c c c c c | c c c c c c}
1 & 0 & 0 & 0 & $\cdots$ & 0 & 0 & 1 & 1 & 1 & $\cdots$ & 1 & 1 \\
0 & 1 & 0 & 0 & $\cdots$ & 0 & 0 & 1 & 1 & 1 & $\cdots$ & 1 & 1 \\
0 & 0 & 1 & 0 & $\cdots$ & 0 & 0 & 0 & 1 & 1 & $\cdots$ & 1 & 1 \\
$\vdots$&&&&$\cdots$ & & $\vdots$&$\vdots$&&& $\cdots$ & &$\vdots$\\
0 & 0 & 0 & 0 & $\cdots$ & 1 & 0 & 0 & 0 & 0 & $\cdots$ & 1 & 1 \\
0 & 0 & 0 & 0 & $\cdots$ & 0 & 1 & 0 & 0 & 0 & $\cdots$ & 0 & 1 \\
\end{tabular}
\right].
\end{equation}

Thus the solutions for the system of equations are in the form:
\begin{equation}
\left[\small
\begin{tabular}{c}
$MC_1$ \\
$MS_2$ \\
$MS_3$ \\
$\vdots$ \\
$MS_{i_{\max}(n)-1}$ \\
\hline
$MC_2$ \\
$MC_3$ \\
$\vdots$ \\
$MC_{i_{\max}(n)-1}$
\end{tabular}
\right]
=
\left[\small
\begin{tabular}{c}
    $A^{-1}
    \begin{bmatrix}
    \delta_2 \\
    \delta_3 \\
    \delta_4 \\
    \vdots \\
    \delta_{i_{\max}(n)} \\
    \end{bmatrix}$\\
    \hline
0\\
0\\
$\vdots$\\
0
\end{tabular}
\right]
+
MC_2
\left[\small
\begin{tabular}{c}
-1 \\
-1 \\
0 \\
$\vdots$ \\
0 \\
\hline
1 \\
0 \\
$\vdots$ \\
0
\end{tabular}
\right]+MC_3
\left[\small
\begin{tabular}{c}
-1 \\
-1 \\
-1 \\
$\vdots$ \\
0 \\
\hline
0 \\
1 \\
$\vdots$ \\
0
\end{tabular}
\right]+\cdots+MC_{i_{\max}(n)-1}
\left[\small
\begin{tabular}{c}
-1 \\
-1 \\
-1 \\
$\vdots$ \\
-1 \\
\hline
0 \\
0 \\
$\vdots$ \\
1
\end{tabular}
\right].
\end{equation}
The length of game path is the sum of all $MC$ and $MS$ terms, thus can be written as
\begin{equation}
\scr
\begin{bmatrix}
    1 & 1 & 1 & \cdots & 1 & 1& 1 & \cdots & 1
\end{bmatrix}
\left[
\begin{tabular}{c}
$MC_1$ \\
$MS_2$ \\
$MS_3$ \\
$\vdots$ \\
$MS_{i_{\max}(n)-1}$ \\
$MC_2$ \\
$MC_3$ \\
$\vdots$ \\
$MC_{i_{\max}(n)-1}$
\end{tabular}
\right]
\end{equation}
which is equal to
\begin{equation}\label{eq:lenraw}
\begin{bmatrix}
    1 & 1 & 1 & \cdots & 1
\end{bmatrix}\
A^{-1}\
    \begin{bmatrix}
    \delta_2 \\
    \delta_3 \\
    \delta_4 \\
    \vdots \\
    \delta_{i_{\max}(n)}
\end{bmatrix}
\ -\ MC_2-2MC_3-\ \cdots\ -(i_{\max}(n)-2)MC_{i_{\max}(n)-1}.
\end{equation}

We calculate $A^{-1}$ with Gauss-Jordan elimination, and let $a_{i,j}$ be the $i,j$-th entry of $A^{-1}$:
\begin{equation}
A^{-1}\ =\ \scr
\begin{bmatrix}
    1 & 2 & 4 & \cdots & a_{1,i_{\max}(n)-2} & a_{1,i_{\max}(n)-1}\\
    0 & 1 & 2 & \cdots & a_{2,i_{\max}(n)-2} & a_{2,i_{\max}(n)-1}\\
    0 & 0 & 1 & \cdots & a_{3,i_{\max}(n)-2} & a_{3,i_{\max}(n)-1}\\
    \vdots& & & \cdots &           & \vdots \\
    0 & 0 & 0 & \cdots & 1 & 2\\
    0 & 0 & 0 & \cdots & 0 & 1\\
\end{bmatrix}
\end{equation}
where $a_{i,j}=2a_{i,j-1}-a_{i,j-3}$ for all $1\leq i \leq i_{\max}(n)-1,\ 4\leq j\leq i_{\max}(n)-1$. Also observe that $a_{i+1,j+1}=a_{i,j}$.

We claim that $a_{1,j}=F_{j+1}-1$, and prove this with induction. First, we see that $a_{1,1}=F_2-1=1,\ a_{1,2}=F_3-1=2,\ a_{1,3}=F_4-1-4$, so this claim holds for $j=1,2,3$. Then suppose this claim holds for all $j'<j$, consider $a_{1,j}$.
\begin{equation}
a_{1,j}=2a_{1,j-1}-a_{1,j-3}=2F_{j}-F_{j-2}-1=F_{j}+F_{j-1}+F_{j-2}-F_{j-2}-1=F_{j+1}-1.
\end{equation}
By induction, our claim holds for all $j$.

Then we calculate $\begin{bmatrix}
    1 & 1 & 1 & \cdots & 1
\end{bmatrix}
A^{-1}$:
\begin{align}
&    \begin{bmatrix}
    1 & 1 & 1 & \cdots & 1
\end{bmatrix}
A^{-1}\nonumber\\
=&
\begin{bmatrix}
    1 & 1 & 1 & \cdots & 1
\end{bmatrix}
\begin{bmatrix}
    F_2-1 & F_3-1 & \cdots & F_{i_{\max}(n)-1} & F_{i_{\max}(n)}\\
    0 & F_2-1 & \cdots & F_{i_{\max}(n)-2} & F_{i_{\max}(n)-1}\\
    \vdots& & \cdots &           & \vdots  \\
    0 & 0 & \cdots & 0 & F_2-1
\end{bmatrix} \nonumber\\
=&
\begin{bmatrix}
    1 & 3 & 7 & \cdots & \ds\sum_{i=2}^{i_{\max}(n)-1}(F_i-1) & \ds\sum_{i=2}^{i_{\max}(n)}(F_i-1)
\end{bmatrix}.
\end{align}

We add an extra 0 before the sequence of $1,\ 3,\ 7,\dots$ and express it explicitly as $a_1=0,\ a_j=\ds\sum_{i=2}^{j}(F_i-1)$ for $j\geq 2$. Then we find a formula for $j\geq 2$:
\begin{equation}
  a_j=\sum_{i=2}^{j}(F_i-1)=\sum_{i=1}^{j}F_i-j=(F_{j}-1)+(F_{j+1}-1)-j=F_{j+2}-j-2  .
\end{equation}
Since $a_1=0=F_3-1-2$ is consistent with this formula, we conclude that $a_j=F_{j+2}-j-2$ for all $j$.

Now the game length given in (\ref{eq:lenraw}) becomes
\begin{equation}\label{eq:tightBound}
    \sum_{j=1}^{i_{\max}(n)} a_j\ \delta_j-MC_2-2MC_3-\cdots-(i_{\max}(n)-2)MC_{i_{\max}(n)-1}.
\end{equation}

Since all $MC$ terms are non-negative, we ignore them for the upper bound. Thus the upper bound on game length is
\begin{equation}\label{eq:formula}
    \sum_{j=1}^{i_{\max}(n)} a_j\ \delta_j=
    a_1\ \delta_1 +
    a_2\ \delta_2 +
    a_3\ \delta_3 +
    \cdots +
    a_{i_{\max}(n)}\ \delta_{i_{\max}(n)}.
\end{equation}
This is the bound we claimed in Theorem \ref{thm:bound}.

\smallskip

We now analyze the order of this bound.

We show that $a_j \leq \frac{3+\sqrt{5}}{2}\ F_{j}-j-\frac{1+\sqrt{5}}{2}$ using Binet's formula
\begin{equation}
    F_j=\frac{1}{\sqrt{5}}\left(\left(\frac{1+\sqrt{5}}{2}\right)^{j+1}-\left(\frac{1-\sqrt{5}}{2}\right)^{j+1}\right)
\end{equation}
and the formula $a_j=F_{j+2}-j-2$. Thus,
\begin{align}
    a_j&\ =\
    \frac{F_{j+2}}{F_j}\ F_j-j-2\nonumber\\
    &\ =\ \frac{\frac{1}{\sqrt{5}}\left(\left(\frac{1+\sqrt{5}}{2}\right)^{j+3}-\left(\frac{1-\sqrt{5}}{2}\right)^{j+3}\right)} {\frac{1}{\sqrt{5}}\left(\left(\frac{1+\sqrt{5}}{2}\right)^{j+1}-\left(\frac{1-\sqrt{5}}{2}\right)^{j+1}\right)}\ F_j-j-2\nonumber\\
    &\ =\ \frac{3+\sqrt{5}}{2}\ F_j+
    \ \left(\frac{1-\sqrt{5}}{2}\right)^{j+1} -j-2.
\end{align}
Since $-1<\frac{1-\sqrt{5}}{2}<0$, we have $\left(\frac{1-\sqrt{5}}{2}\right)^{j+1} \leq \left(\frac{1-\sqrt{5}}{2}\right)^2\ =\ \frac{3-\sqrt{5}}{2}$, so
\begin{equation}
    a_j\ \leq\ \frac{3+\sqrt{5}}{2}\ F_j+\frac{3-\sqrt{5}}{2}-j-2\ =\ \frac{3+\sqrt{5}}{2}\ F_j-j-\frac{1+\sqrt{5}}{2}.
\end{equation}

Therefore we have
\begin{align}
\sum_{j=1}^{i_{\max}(n)} a_j\ \delta_j
\ \leq\ &\sum_{j=1}^{i_{\max}(n)} \left(\frac{\sqrt{5}+3}{2}\ F_j-j-\frac{1+\sqrt{5}}{2}\right)\delta_j\nonumber\\
    \ =\ & \frac{\sqrt{5}+3}{2}\sum_{j=1}^{i_{\max}(n)} F_j\ \delta_j - \sum_{j=1}^{i_{\max}(n)} j\ \delta_j - \frac{1+\sqrt{5}}{2}\sum_{j=1}^{i_{\max}(n)}\delta_j \nonumber\\
    \ =\ & \frac{\sqrt{5}+3}{2}\ n\ -\ IZ(n)\ -\ \frac{1+\sqrt{5}}{2}Z(n).
\end{align}

In conclusion, the game length is at most $\frac{\sqrt{5}+3}{2}\ n\ -\ IZ(n)\ -\ \frac{1+\sqrt{5}}{2}Z(n)$.
\end{proof}
In this proof, equation (\ref{eq:tightBound}) is the exact game length and (\ref{eq:formula}) is the bound we gave in Theorem \ref{thm:bound}. From these two equations, we observe that our upper bound is strict if and only if the game on $n$ can be played with only splitting and combine 1 moves.
\begin{rek}
The same method can be used to calculate game length even if the game does not start from all $1$'s
by replacing $\delta_i$'s with the difference in number of $F_i$'s between starting state and final state at each position. Also if we take the $MS_i$ terms as free variables instead of pivot variables, we can use this method to calculate the lower bound of the game.
\end{rek}

We move on to the proof of Theorem \ref{thm:splitGame}.
The reason we are interested in games that can be played with only splitting moves is to identify for which $n$ is our upper bound in Theorem  \ref{thm:bound} strict.
\begin{lem}\label{lem:splitgame1}
If $n\ =\ F_k-1\ (k\geq2)$, then we can play the game with only split and combine 1 moves (starting from any game state).
\end{lem}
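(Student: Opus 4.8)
The plan is to prove a statement slightly stronger than what is literally needed, so as to match the parenthetical remark: for \emph{every} game state $G$ whose terms sum to $F_k-1$, there is a sequence of split moves and combine-$1$ moves leading to the Zeckendorf decomposition of $F_k-1$. First I would record the target. A short induction using $F_{2m}-1=\sum_{j=1}^{m}F_{2j-1}$ and $F_{2m+1}-1=\sum_{j=1}^{m}F_{2j}$ shows that the Zeckendorf decomposition of $F_k-1$ consists of alternating Fibonacci numbers and satisfies the recursion
\[
Z(F_k-1)\ =\ Z(F_{k-2}-1)\,\sqcup\,\{F_{k-1}\},
\]
with top index $k-1$ and second-largest index $k-3$. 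I would then argue by strong induction on $k$, with base cases $k=2$ (the single state $\{F_1\}$, already terminal) and $k=3$ (where $\{F_1^2\}$ reaches $\{F_2\}$ by one combine-$1$ move).

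The engine of the induction is a value bound: in any multiset of Fibonacci numbers summing to $V$, every term is at most $V$, and since splits and combine-$1$ moves preserve the sum of whichever sub-collection they act on, a sub-collection of value $V$ can only ever contain terms $F_i\le V$ throughout the rest of the game. In particular a sub-collection of value $F_{k-2}-1<F_{k-2}$ is confined to indices $\le k-3$, so it can never create a copy of $F_{k-1}$ nor interact with one sitting beside it. Given $G$ of value $F_k-1$ (so all indices are $\le k-1$, and there is at most one $F_{k-1}$ since $F_k-1<2F_{k-1}$), the clean case is when $G$ already contains an $F_{k-1}$: then the remaining terms have value $F_{k-2}-1$, by the value bound they are confined to indices $\le k-3$, and I apply the inductive hypothesis to this remainder. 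Its moves never touch the standalone $F_{k-1}$, and by the recursion above the result is exactly $Z(F_k-1)$.

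It remains to reach a state containing an $F_{k-1}$ when $G$ has none, i.e. when all indices are $\le k-2$; this is the main obstacle. The only allowed move producing $F_{k-1}$ is the split $S_{k-2}$ (from two copies of $F_{k-2}$), so the goal becomes manufacturing two copies of $F_{k-2}$. A counting step shows $G$ has at most two $F_{k-2}$'s, since $3F_{k-2}-(F_k-1)=F_{k-4}+1>0$; if it has two, one $S_{k-2}$ finishes the phase, producing one $F_{k-1}$ together with a harmless $F_{k-4}$ byproduct that the value bound absorbs into the value-$(F_{k-2}-1)$ remainder. When fewer than two $F_{k-2}$ are present, I would manufacture them by a downward cascade: a copy of $F_{k-2}$ is created by $S_{k-3}$, which needs two $F_{k-3}$'s, created in turn by $S_{k-4}$, and so on, bottoming out at combine-$1$ moves that build $F_2$'s from the $1$'s. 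The total value satisfies $F_k-1\ge 2F_{k-2}$, which guarantees enough material to complete the cascade.

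The delicate point, and the part I expect to require the most care, is ordering this cascade so that the byproduct terms $F_{i-2}$ thrown off by each split do not block the next split; I would formalize this by a nested induction on the largest index $j$ for which $G$ can afford two copies of $F_j$, splitting at that index first and recursing on the lower-index material. Once an $F_{k-1}$ has been created we are back in the clean case, and invoking the inductive hypothesis on the value-$(F_{k-2}-1)$ remainder completes the proof.
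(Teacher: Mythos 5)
Your ``clean case'' is sound: the observation that split and combine-$1$ moves conserve the value of the sub-collection they act on, so a remainder of value $F_{k-2}-1$ can never grow a term of index $\ge k-2$ and hence never interacts with a standalone $F_{k-1}$, is correct, as is the recursion $Z(F_k-1)=Z(F_{k-2}-1)\sqcup\{F_{k-1}\}$. But the entire content of the lemma sits in the other case --- manufacturing an $F_{k-1}$ when none is present --- and there your proposal stops proving and starts describing. Two concrete problems. First, ``the total value satisfies $F_k-1\ge 2F_{k-2}$, which guarantees enough material to complete the cascade'' is not an argument: \emph{every} move of the game conserves total value, so value alone cannot distinguish a state from which the cascade proceeds from one where it stalls; what you actually need is that a split or combine-$1$ move is \emph{available} at every non-final state you encounter, and that the split-only play makes progress and terminates. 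You establish neither. Second, your proposed formalization --- ``nested induction on the largest index $j$ for which $G$ can afford two copies of $F_j$'' --- is undefined at the critical point: if ``afford'' means $G$ already contains two copies of $F_j$, you have not shown such a $j$ exists (that existence claim \emph{is} the crux of the lemma); if it means $G$ has value at least $2F_j$, then ``splitting at that index first'' is circular, since the two copies are exactly what you are trying to create.

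Both missing ingredients are available cheaply, and supplying them essentially collapses your construction into the paper's proof. Counting: with the paper's indexing, $F_1+F_2+\cdots+F_m=F_{m+2}-2$, so a state of value $F_k-1$ with no repeated term and no $F_{k-1}$ has value at most $F_k-2$, a contradiction; combining this with your own value bound (and induction on $k$ for the case where $F_{k-1}$ is present), every non-final state of value $F_k-1$ contains some $F_i$ twice, i.e.\ a split or combine-$1$ move is always available. Since \cite{BEFM} proves that every play of the game, however the moves are chosen, terminates in the Zeckendorf decomposition, the lemma follows at once --- no cascade, no ordering of byproducts, no induction on $k$ needed. This is exactly the shape of the paper's proof, which is deliberately non-constructive: it proves the availability claim by assuming a non-final all-distinct state, playing largest-index combines (which preserve all-distinctness) down to the final state, and then deriving a contradiction from the alternating structure of $Z(F_k-1)$, since the last combine $F_{j-1}\wedge F_j\rightarrow F_{j+1}$ would force a second copy of $F_{j-1}$. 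The lesson to take from the comparison is that the byproduct-blocking difficulty you flag as ``delicate'' is not something to be managed by careful ordering; it is something the availability-plus-termination argument avoids entirely.
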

\begin{proof}
To prove this, we first prove that any game state (except the final one) in the game on $n\ =\ F_k-1\ (k\geq2)$ has at least two $F_i$'s for some $i$.

Suppose for the sake of contradiction that there is a game state that has at most one of any $F_i$ and is not the final game state.
Since this game state is not the final state, there are moves that we can apply. Since this game state has at most one of any $F_i$, we cannot apply any splitting moves. Thus, there must be some combining moves available. We apply the combining move with the largest index, say $F_{i-1}\wedge F_{i}\rightarrow F_{i+1}$. Note that we cannot have $F_{i+1}$ in this game state, or we would have chosen to combine $F_i$ and $F_{i+1}$. Thus after this move, we still have at most one of any $F_i$. We repeat this process until we reach the final state, and each game state we visit has at most one of any $F_i$.

Now we consider the Zeckendorf Decomposition of $n\ =\ F_k-1$.
It has to be in the form of $F_1+F_3+F_5+\cdots$ or $F_2+F_4+F_6+\cdots$ because if we add $1$ to these decompositions, we can get a Fibonacci number. By assumption, we reach the final state with a combining move. Let $F_{j-1}\wedge F_{j}\rightarrow F_{j+1}$ be the last step we took. Since there is a $F_{j+1}$ in the final decomposition of $n$, there must also be a $F_{j-1}$. Thus we had two $F_{j-1}$'s before the last step we took. However, we just showed that each game state we visit has at most one of any $F_i$, which is a contradiction.

Now that we have proved that any game state (except the final one) in the game on $n\ =\ F_k-1\ (k\geq2)$ has at least two $F_i$'s for some $i$, we know that we can apply a splitting or combine $1$ move at any game state until the game terminates. Thus, the game can be played with only splitting and combine $1$ moves.
\end{proof}

\begin{lem}\label{lem:splitgame2}
If we can play the game with only splitting and combining $1$ moves, then $n$\ can only be in the form $F_{k}-1\ (k\geq2)$.
\end{lem}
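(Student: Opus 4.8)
This is the converse of Lemma \ref{lem:splitgame1}, so I need to show that the ability to play using only split and combine-$1$ moves forces $n = F_k - 1$ for some $k \geq 2$. The plan is to prove the contrapositive: if $n$ is \emph{not} of the form $F_k - 1$, then every game path must at some point be forced to use a combining move (with index $\geq 2$). Equivalently, I will exhibit a game state, reachable under the split-and-combine-$1$-only regime, that has at most one copy of every $F_i$ and is not the final Zeckendorf state — at such a state no split or combine-$1$ move is available, so to continue the game one is forced to combine, contradicting the hypothesis.

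The key structural fact to isolate is the following. If one plays \emph{only} split and combine-$1$ moves, consider what the terminal configuration of such a run looks like: once no split or $C_1$ move is available, every index $i$ has at most one $F_i$ present (no repeats), and moreover there is no repeated $F_1$. A configuration with at most one of each $F_i$ is already a legal Zeckendorf-type sum of distinct, but possibly \emph{consecutive}, Fibonacci numbers. The claim is that the hypothesis forces this distinct-terms configuration to actually be the Zeckendorf decomposition of $n$, and that the Zeckendorf decomposition of $n$ has the special "alternating/maximal" shape $F_1 + F_3 + F_5 + \cdots$ or $F_2 + F_4 + F_6 + \cdots$ characteristic of $n = F_k - 1$. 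First I would argue that if splitting ever produces a state with all multiplicities $\leq 1$ that still admits a \emph{combine} move on consecutive indices, then that state is non-final yet split-and-$C_1$-dead, giving the forced combine and hence the contradiction. So the only way to avoid being forced into a combine is for the distinct-terms state reached to already be the Zeckendorf decomposition, i.e.\ to contain no two consecutive indices.

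To pin down the arithmetic, I would run the same index-balance bookkeeping used in the proof of Theorem \ref{thm:bound}: tracking the number of each $F_i$ as moves are applied, with all $MC_i = 0$ for $i \geq 2$ (only $MS_i$ and $MC_1 = MS_1$ allowed). Setting the combining variables to zero in the system \eqref{eq:syseq} gives strong divisibility/parity constraints on the $\delta_i$, and I expect these constraints to be solvable in nonnegative integers exactly when the $\delta_i$ form the alternating pattern above, i.e.\ exactly when $n = F_k - 1$. Concretely, $F_k - 1 = \sum_{i \text{ same parity}} F_i$ is the standard identity $F_1 + F_3 + \cdots + F_{2m-1} = F_{2m} - 1$ and $F_2 + F_4 + \cdots + F_{2m} = F_{2m+1} - 1$, so recognizing this pattern in the forced solution closes the argument.

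The main obstacle I anticipate is the first reduction: carefully justifying that a split-only-reachable state with all multiplicities $\leq 1$ but containing two \emph{consecutive} indices really is forced to combine and really is non-final, and that such a state is unavoidable whenever the Zeckendorf decomposition of $n$ is not of the alternating form. The delicate point is that splitting a pair $F_i \wedge F_i \to F_{i-2} \wedge F_{i+1}$ can itself create a consecutive pair or resolve one, so I must show the obstruction cannot be perpetually postponed by clever splitting. I would handle this by a monovariant/descent argument — for instance tracking a weighted count that strictly decreases under splits and combine-$1$'s — to guarantee the split-and-$C_1$-only process terminates, and then analyzing its terminal state directly via the balance equations, rather than trying to control every intermediate configuration.
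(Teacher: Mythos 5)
Your overall frame (contrapositive, plus the identities $F_1+F_3+\cdots+F_{2m-1}=F_{2m}-1$ and $F_2+F_4+\cdots+F_{2m}=F_{2m+1}-1$ characterizing the Zeckendorf decompositions of $F_k-1$) is fine, but the engine you propose for the key step cannot work. Setting $MC_i=0$ for $i\ge 2$ in the system \eqref{eq:syseq} does \emph{not} produce any divisibility or parity constraints on the $\delta_i$: the system then becomes square with coefficient matrix $A$, whose inverse has only nonnegative integer entries ($a_{i,j}=F_{j-i+2}-1$ on and above the diagonal), so $(MC_1,MS_2,\dots,MS_{i_{\max}(n)-1})^T=A^{-1}(\delta_2,\dots,\delta_{i_{\max}(n)})^T$ is a nonnegative integer vector for \emph{every} $n$. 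Concretely, take $n=6=F_4+F_1$, which is not of the form $F_k-1$: the balance equations are satisfied by $MC_1=4$, $MS_2=2$, $MS_3=1$, $MC_2=MC_3=0$, yet no split/combine-$1$-only game on $6$ exists --- every such run reaches the state $\{F_1\wedge F_2\wedge F_3\}$, where all multiplicities are at most one but the state is not final. The point is that the balance equations constrain only \emph{net move counts}; the true obstruction is a scheduling one (a split at index $i$ requires two copies of $F_i$ to be present \emph{simultaneously}), and that is invisible to any linear bookkeeping. This is exactly why the paper's proof is not arithmetic: it takes a hypothetical split/$C_1$-only game and derives a contradiction from the \emph{order} of moves --- isolating the last split at a given index and showing that a later split at that same index is forced --- combined with the invariant that split/$C_1$-only play can never create a gap of more than $3$ between consecutive indices (its Claim 1), in a case analysis on the shape of the Zeckendorf decomposition of $n$.

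There is also a quantifier slip in your reduction. The hypothesis of the lemma is existential --- \emph{some} split/$C_1$-only path completes the game --- so exhibiting one reachable stuck state contradicts nothing; you must show that \emph{every} split/$C_1$-only run fails to end in the Zeckendorf decomposition. Your proposed monovariant addresses only the termination of such runs, which is not in doubt (every game path is finite by \cite{BEFM}); it gives no control over \emph{which} distinct-terms configuration a cleverly ordered run terminates in, and, as shown above, the balance equations cannot rule out the Zeckendorf decomposition as that terminal state. Repairing the proof essentially requires ordering arguments of the kind the paper uses, not a refinement of the linear algebra from Theorem \ref{thm:bound}.
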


\begin{proof}
We consider all possible $n$ that are not in the form of $F_{k}-1\ (k\geq2)$ and divide them into three cases based on their final decomposition. In each case, we prove that the game on $n$ cannot be played with only splitting moves.

\vspace{4pt}
\textit{Case 1}: The smallest term in the final decomposition of $n$ is at least $F_3$.

Suppose that the game on $n$ can be played with only splitting and combine $1$ moves.
Since the smallest term in $n$'s final decomposition is at least $F_3$, we have to generate at least one $F_3$ at some point of the game. Thus there must exist split $2$ moves in the game.
Let $t$ be the last split $2$ move in the game.

At step $t$, one $F_1$ is generated. Since there are no $F_1$ in the final decomposition, we have to do a combine $1$ move to decrease the number of $F_1$'s, producing a $F_2$ with this move. Since there is no $F_2$ in the final decomposition, we have to do a split $2$ move, which contradicts our assumption that step $t$ is the last split $2$ move.

\vspace{4pt}
\textit{Case 2}: The first two terms in the final decomposition of $n$ are $F_1$ and $F_4$.

Suppose that the game on $n$ can be played with only splitting and combine $1$ moves.
Since we have $F_4$ in the final decomposition, we have to generate a $F_4$ at some point of the game.
Thus there exist split $3$ moves in the game. To do the split move, we need a $F_3$, so there must exist a split $2$ move. Let step $v$ be the last split $2$ move in the game.

At step $v$, one $F_3$ and one $F_1$ is generated.
Since there is no $F_3$ in the final decomposition, there exists a split $3$ move after step $v$, and we denote it step $t$. Then in step $t$, there is one $F_1$ generated.

Since both step $v$ and $t$ generates $F_1$, there exists a game state after step $v$ that contains $2\ F_1$. Therefore, there exists a combine $1$ move after step $v$. With this move, one $F_2$ is generated. Since there is no $F_2$ in the final decomposition, we have to do a split $2$ move to get rid of this $F_2$ which contradicts our assumption that $v$ is the last split $2$ move.

\vspace{4pt}
\textit{Case 3}: The smallest term of $n$'s decomposition is $F_1$ or $F_2$, and the first $2$ terms in the decomposition are not $F_1$ and $F_4$.

For any such $n$, suppose the game can be played without any combining moves except combine $1$.

In the following proof, we define the gap between two terms as the difference between their indices.
Notice that for any $n$ not in the form of $F_k-1$ $(k\geq 2)$, its decomposition either has a smallest term of $F_3$ or larger (\textit{Case 1}), or contains two consecutive terms with a gap of at least $3$. We make the following claim.

\smallskip
\textbf{Claim 1.}
If a game is played with only split or combine $1$ moves, then no game state contains two consecutive terms with a gap larger than $3$.

\smallskip
\textbf{Proof of Claim 1.} Since the first step of the game is always $F_1\wedge F_1\rightarrow F_2$, which only generates a gap of $1$, the claim is true for the first step. Suppose the claim is true for $m$th step. For the ($m+1$)th step, if we split $2$'s, it generates a gap of at most $2$, and the gap between $F_{3}$ and other terms larger than $F_{3}$ shrinks.

If we combine $1$'s, we generate a gap of $1$, and the gap between $F_{2}$ and other terms larger than $F_2$ shrinks.

Every time we split $F_{i}\wedge F_{i} \rightarrow F_{i+1} \wedge F_{i-2}$ ($i>2$), we generate a gap of $3$, which is the gap between $F_{n+1}$ and $F_{i-2}$, but the gaps between $F_{i-2}$ and any terms smaller than $F_{n-2}$ shrink, and the gaps between $F_{i+1}$ with any terms larger than $F_{i+1}$ shrink. In other words, every time we split, we generate a gap of $3$ but also make other gaps smaller.

Therefore, for all the situations above, the claim is true for ($m+1$)th step, and by induction Claim $1$ is true.

\smallskip
We now use this claim to finish our proof.
Note that for any $n$ in Case $3$, there will be at least two terms with a gap of at least $3$ and no terms in between them. From Claim $1$, we also proved that when we only use combine $1$ and splitting moves, we can never generate a gap that is more than $3$. Therefore, if $n$'s decomposition has a gap of more than $3$, then it is out of our consideration. For other $n$, we can find an $i$ such that $F_{i-2}$ and $F_{i+1}$ are two terms in the final decomposition, and there is neither $F_{i-1}$ nor $F_{i}$ in the final decomposition.

Since $F_{i+1}$ is in the final decomposition, there must exist a step where $S_{i}$ is performed. This is because the first time $F_{i+1}$ is generated in the game must come from the $S_{i}$.
So, let step $t$ be the last $S_i$.

In order for the $S_i$ happen, there must be at least two $F_{i}$'s generated prior. Thus, there must exist a step of splitting $F_{i-1}$ moves before step $t$ (this is because the first time $F_{i}$ is generated in the game must come from the splitting $F_{i-1}$ move).
So, let step $v$ be the last $S_{i-1}$ in the game.

Since $F_{i-2}$ is in the final decomposition, $F_{i-3}$ is not in the final decomposition (because the final decomposition does not contain two consecutive Fibonacci numbers). Note that step $v$ has generated one $F_{i-3}$, so there must be a $S_{i-3}$ after step $v$ to get rid of this $F_{i-3}$ (here, the combine $1$ move is also considered as a splitting move), and we can call it step $r$.

Since step $v$ has generated one $F_i$ and $F_i$ is not in the final decomposition, there must be a $S_i$ after step $v$ in order to get rid of this $F_i$, and we can call it step $s$.

Since both step $r$ and step $s$ are after step $v$ and each of them has generated one $F_{i-2}$, there must exist $2F_{i-2}$ at some point after step $v$. As a result, there must be a $S_{i-2}$ after step $v$, and we can call it step $w$.

Since step $w$ has generated one $F_{i-1}$ and $F_{i-1}$ is not in the final decomposition, there must be a $S_{i-1}$ after step $w$ in order to get rid of this $F_{i-1}$. In other words, there is a $S_{i-1}$ after step $v$, which contradicts with our assumption that step $v$ is the last occurrence of $S_{i-1}$.

Therefore, Lemma \ref{lem:splitgame2} is proved.
\end{proof}

The proof for Theorem \ref{thm:splitGame} follows directly from Lemma \ref{lem:splitgame1} and \ref{lem:splitgame2}.



\section{Future Work}\label{sec:conclusion}
It is worth noting that while our upper bound is sharp for some $n$, there is still plenty of room for it to be tightened. It was alluded to previously in the proof of \ref{thm:bound} that this could be done by quantifying the number of each combining moves, but such work is beyond the scope of this paper. In recent work, Cusenza et. al. \cite{CDHKKMMTYZ} investigated winning strategies for alliances of players in a multi-person generalization of the Zeckendorf game; one can similarly investigate the number of moves of various strategies in these settings.

Additionally, there are many ways the Zeckendorf game can be generalized (see for example \cite{BEFMgen}). 
With that in mind, we ask the following questions in regards to how our work relates to other similar games.
\begin{itemize}
    \item Can our methods for analysis of game bounds be performed on generalized games?
    \item Can we generalize the strategies suggested in this paper to achieve the longest or shortest game length in generalized games?
\end{itemize}




\ \\

\end{document}